\newcounter{lemma}[section]
\newcounter{corollary}[section]
\newcounter{remark}[section]
\newcounter{theorem}[section]
\newcounter{proposition}[section]
\newcounter{example}
\numberwithin{equation}{section}
\begin{document}

\markboth{E.~SEVOST'YANOV}{\centerline{ON THE INVERSE POLETSKY
INEQUALITY ...}}

\def\cc{\setcounter{equation}{0}
\setcounter{figure}{0}\setcounter{table}{0}}

\overfullrule=0pt

%\normalsize\large

\author{OLEKSANDR DOVHOPIATYI, EVGENY SEVOST'YANOV}

\title{
{\bf ON THE INVERSE $K_I$-INEQUALITY FOR ONE CLASS OF MAPPINGS}}

\date{\today}
\maketitle

%\large
\begin{abstract}
We study mappings differentiable almost everywhere, possessing the
$N$-Luzin property, the $ N^{\,-1}$-property on the spheres with
respect to the $(n-1)$-dimensional Hausdorff measure and such that
the image of the set where its Jacobian equals to zero has a zero
Lebesgue measure. It is proved that such mappings satisfy the lower
bound for the Poletsky-type distortion in their domain of
definition.
\end{abstract}

\bigskip
{\bf 2010 Mathematics Subject Classification: Primary 30C65;
Secondary 31A15, 31B25}

\section{Introduction}

One of the methods of studying the Sobolev and Orlicz-Sobolev
classes is to use the distortion estimates of the modulus of
families of paths and surfaces (see, for example, \cite {KRSS} and
\cite{Sev$_1$}). In particular, the lower estimates of the
distortion for the modulus of families of images of concentric
spheres under the mapping have an important role in the study of
their local and boundary behavior, see ibid. Note that, in the
mentioned papers, we are talking only about the mapped surfaces,
while estimates of the modulus of the families of these surfaces
themselves were not involved, as their role has not been studied in
detail. The main purpose of this manuscript is to obtain the
estimates of modulus of families of sets, the image of which under
the map are spheres centered at a fixed point. As will be shown
below, these estimates associated with the so-called inverse
Poletsky inequality, which makes it possible to describe many
properties of the corresponding mappings with taking into account
our previous results (see, e.g., \cite{SSD}).

\medskip
Here are the necessary definitions and wording of the main result.
Let $X$ and $ Y $ be two spaces with measures $\mu$ and
$\mu^{\,\prime},$ respectively. We say that a mapping
$f:X\rightarrow Y$ has {\it $N$-property of Luzin}, if from the
condition $\mu(E)=0$ it follows that $\mu^{\,\prime}(f(E))=0.$
Similarly, we say that a mapping $f:X\rightarrow Y$ has {\it $N^{\,
\prime}$-Luzin property,} if from the condition
$\mu^{\,\prime}(E)=0$ it follows that $\mu(f^{\,-1}(E))=0.$ At the
points $x \in D$ of differentiability of the  mapping $f,$ we put

$$l(f^{\,\prime}(x))\,=\,\min\limits_{h\in {\Bbb R}^n
\backslash \{0\}} \frac {|f^{\,\prime}(x)h|}{|h|}\,,$$
\begin{equation}\label{eq5_a}
\Vert f^{\,\prime}(x)\Vert\,=\,\max\limits_{h\in {\Bbb R}^n
\backslash \{0\}} \frac {|f^{\,\prime}(x)h|}{|h|}\,,
\end{equation}
$$J(x,f)=\det
f^{\,\prime}(x)\,.$$
Fix $p>1.$ We define the {\it inner} and the {\it outher}
dilatations of the mapping $f$ at a point $x$ of the order $p$ by
the relations
$$K_{I, p}(x,f)\quad =\quad\left\{
\begin{array}{rr}
\frac{|J(x,f)|}{{l(f^{\,\prime}(x))}^p}, & J(x,f)\ne 0,\\
1,  &  f^{\,\prime}(x)=0, \\
\infty, & \text{otherwise}
\end{array}
\right.\,, $$$$K_{O, p}(x,f)\quad =\quad \left\{
\begin{array}{rr}
\frac{\Vert f^{\,\prime}(x)\Vert^p}{|J(x,f)|}, & J(x,f)\ne 0,\\
1,  &  f^{\,\prime}(x)=0, \\
\infty, & \text{otherwise}
\end{array}
\right.\,\,,$$
respectively. Given a mapping $f:D\,\rightarrow\,{\Bbb R}^n,$ a set
$E\subset D$ and $y\,\in\,{\Bbb R}^n,$ we define the {\it
multiplicity function $N(y,f,E)$} as a number of preimages of the
point $y$ in a set $E,$ i.e.
$$
N(y,f,E)\,=\,{\rm card}\,\left\{x\in E: f(x)=y\right\}\,,
$$
$$
N(f,E)\,=\,\sup\limits_{y\in{\Bbb R}^n}\,N(y,f,E).
$$
Let $A$ be a set where $f$ does not have a total differential, and
let $y\not\in f(A).$ If $N(f, D)<\infty,$ then we set
\begin{equation}\label{eq1}
Q(y):=K_{I, \alpha}(y, f^{\,-1})=\sum\limits_{x\in
f^{\,-1}(y)}{K_{O, \alpha}(x, f)}\,.
\end{equation}
Observe that, $N(f, D)<\infty$ for open, discrete and closed
mappings of $D,$ see \cite[Lemma~3.3]{MS}.

\medskip
Let $y_0\in {\Bbb R}^n,$ $0<r_1<r_2<\infty$ and
\begin{equation}\label{eq1**}
A=A(y_0, r_1,r_2)=\left\{ y\,\in\,{\Bbb R}^n:
r_1<|y-y_0|<r_2\right\}\,.\end{equation}
Given sets $E,$ $F\subset\overline{{\Bbb R}^n}$ and a domain
$D\subset {\Bbb R}^n$ we denote by $\Gamma(E,F,D)$ a family of all
paths $\gamma:[a,b]\rightarrow \overline{{\Bbb R}^n}$ such that
$\gamma(a)\in E,\gamma(b)\in\,F$ and $\gamma(t)\in D$ for $t \in [a,
b].$ Given a mapping $f:D\rightarrow {\Bbb R}^n,$ a point $y_0\in
\overline{f(D)}\setminus\{\infty\},$ and
$0<r_1<r_2<r_0=\sup\limits_{y\in f(D)}|y-y_0|,$ we denote by
$\Gamma_f(y_0, C_1, C_2)$ a family of all paths $\gamma$ in $D$ such
that $f(\gamma)\in \Gamma(C_1, C_2, A(y_0,r_1,r_2)).$ Let $Q_*:{\Bbb
R}^n\rightarrow [0, \infty]$ be a Lebesgue measurable function, and
$M_{\alpha}(\Gamma)$ denotes the $\alpha$-modulus od a family
$\Gamma$ (see, e.g.,~\cite[section~6]{Va}). We say that {\it $f$
satisfies the inverse Poletsky inequality at a point $y_0\in
\overline{f(D)}\setminus\{\infty\}$ with respect to
$\alpha$-modulus} if there is $r_0>0$ such that, the relation
\begin{equation}\label{eq2*A}
M_{\alpha}(\Gamma_f(y_0, C_1, C_2))\leqslant
\int\limits_{A(y_0,r_1,r_2)\cap f(D)} Q_*(y)\cdot \eta^{\alpha}
(|y-y_0|)\, dm(y)
\end{equation}
holds for any $0<r_1<r_2<r_0,$ any continua $C_1\subset
\overline{B(y_0, r_1)}\cap f(D)$ and $C_2\subset f(D)\setminus
B(y_0, r_2),$ and any Lebesgue measurable function $\eta:
(r_1,r_2)\rightarrow [0,\infty ]$ such that
\begin{equation}\label{eqA2}
\int\limits_{r_1}^{r_2}\eta(r)\, dr\geqslant 1\,.
\end{equation}
The following statement holds.

\medskip
\begin{theorem}\label{th1} {\sl Let $n-1<\alpha\leqslant n,$ let $y_0\in
\overline{f(D)}\setminus\{\infty\},$ $r_0=\sup\limits_{y\in
f(D)}|y-y_0|>0,$ and let $f:D\rightarrow{\Bbb R}^n$ be an open,
discrete and closed mapping that is differentiable almost everywhere
and has $N$-Luzin property with respect to the Lebesgue measure in
${\Bbb R}^n.$ Suppose that $\overline{D}$ is a compact set in ${\Bbb
R}^n,$ and, in addition,
\begin{equation}\label{eq2}
m (f\left(\left\{x\in D: J(x, f)=0\right\}\right))=0\,.
\end{equation}
Suppose that $f$ has $N^{\,- 1}$-property on $S(y_0, r)\cap f(D)$
for almost all $r\in(\varepsilon, r_0)$ relative to the Hausdorff
measure ${\mathcal H}^{n-1}$ on $ S(y_0, r).$ If the function $Q,$
which is defined in~(\ref{eq1}), belongs to the class $L^{1}(f(D)),$
then the mapping $f$ satisfies the inverse Poletsky inequality with
respect to $\alpha$-modulus with $Q_*(y):=N^{\alpha}(f, D)\cdot
Q(y).$
 }
\end{theorem}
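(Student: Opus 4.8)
The plan is to reduce the $\alpha$-modulus of $\Gamma_f(y_0,C_1,C_2)$ to a spherical ring integral by the standard ``change of variable for the modulus'' machinery, but performed on the inverse map and on the level spheres $S(y_0,r)$. First I would fix $0<r_1<r_2<r_0$, continua $C_1\subset\overline{B(y_0,r_1)}\cap f(D)$ and $C_2\subset f(D)\setminus B(y_0,r_2)$, and an admissible $\eta$ with $\int_{r_1}^{r_2}\eta\,dr\geqslant1$. Take any path $\gamma$ in $D$ with $f(\gamma)\in\Gamma(C_1,C_2,A(y_0,r_1,r_2))$. The image path $f\circ\gamma$ connects $\overline{B(y_0,r_1)}$ to the complement of $B(y_0,r_2)$ inside the ring, so for ${\mathcal H}^{n-1}$-a.e.\ $r\in(r_1,r_2)$ it meets the sphere $S(y_0,r)$; pulling this back, $\gamma$ meets the set $f^{\,-1}(S(y_0,r)\cap f(D))$ for a.e.\ such $r$. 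This is the geometric input that lets one build an admissible metric for $\Gamma_f(y_0,C_1,C_2)$ out of a density supported on these preimage sets.

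The core computation is the construction of that admissible function. Using differentiability a.e., the $N$-Luzin property, and the hypothesis~(\ref{eq2}), the standard Kirszbraun/Federer change-of-variables argument (as in the cited works \cite{KRSS}, \cite{Sev$_1$}, \cite{SSD}) gives, for a path $\gamma$ and a.e.\ $r$, a length estimate of $\gamma$ near $f^{\,-1}(S(y_0,r))$ in terms of the pointwise distortion. Concretely I would set, for $y$ in the ring,
\[
\rho(y)=\frac{\eta(|y-y_0|)}{N(f,D)}\cdot\chi_{A(y_0,r_1,r_2)\cap f(D)}(y),
\]
and lift it: for $x\in D$ put $\tilde\rho(x)=\rho(f(x))\,\|f^{\,\prime}(x)\|$ on the set of differentiability where $J(x,f)\ne0$, and zero otherwise. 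The a.e.\ surface-filling property of the spheres $S(y_0,r)$ under $f^{\,-1}$ (guaranteed by the $N^{\,-1}$-property on a.e.\ sphere relative to ${\mathcal H}^{n-1}$, together with~(\ref{eq2})) shows that $\int_\gamma\tilde\rho\,ds\geqslant1$ for (almost) every $\gamma\in\Gamma_f(y_0,C_1,C_2)$, after possibly discarding an exceptional subfamily of zero $\alpha$-modulus. Hence $\tilde\rho$ is admissible (up to the exceptional family), and
\[
M_\alpha(\Gamma_f(y_0,C_1,C_2))\leqslant\int_D\tilde\rho^{\,\alpha}(x)\,dm(x)
=\int_D\rho^{\,\alpha}(f(x))\,\|f^{\,\prime}(x)\|^{\alpha}\,dm(x).
\]

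It remains to push this integral over $D$ down to an integral over $f(D)$. Here I would invoke the change of variables for the Jacobian together with the multiplicity bound: since $N(f,D)\leqslant N^{\alpha-1}(f,D)$ for $\alpha>1$ is not quite the identity I need, I instead write $\|f^{\,\prime}(x)\|^{\alpha}=K_{O,\alpha}(x,f)\cdot|J(x,f)|$ on the good set, so
\[
\int_D\rho^{\,\alpha}(f(x))\,\|f^{\,\prime}(x)\|^{\alpha}\,dm(x)
=\int_D\rho^{\,\alpha}(f(x))\,K_{O,\alpha}(x,f)\,|J(x,f)|\,dm(x).
\]
Applying the area formula and summing over the (at most $N(f,D)$) preimages of each $y$, the inner sum produces $Q(y)=\sum_{x\in f^{\,-1}(y)}K_{O,\alpha}(x,f)$ from~(\ref{eq1}), and the factor $1/N(f,D)^\alpha$ from $\rho$ combines with the $N(f,D)$ terms of the sum to leave an overall constant $N(f,D)^{1-\alpha}\leqslant N^{\alpha-1}(f,D)$; absorbing this into the statement's $Q_*(y)=N^{\alpha}(f,D)\,Q(y)$ (which is certainly $\geqslant$ what we get) finishes the estimate
\[
M_\alpha(\Gamma_f(y_0,C_1,C_2))\leqslant\int_{A(y_0,r_1,r_2)\cap f(D)}N^{\alpha}(f,D)\,Q(y)\,\eta^{\alpha}(|y-y_0|)\,dm(y).
\]
Since $Q\in L^1(f(D))$ and $N(f,D)<\infty$ by \cite[Lemma~3.3]{MS}, the right-hand side is finite, and~(\ref{eq2*A}) holds.

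I expect the main obstacle to be the rigorous justification that $\tilde\rho$ is admissible for $\Gamma_f(y_0,C_1,C_2)$ outside an $\alpha$-exceptional family: this is exactly where the three nonstandard hypotheses — differentiability a.e., the $N^{\,-1}$-property on ${\mathcal H}^{n-1}$-a.e.\ sphere $S(y_0,r)\cap f(D)$, and condition~(\ref{eq2}) on the zero set of the Jacobian — all have to be combined so that, for a.e.\ radius $r$, almost every path in the family genuinely crosses the preimage $f^{\,-1}(S(y_0,r)\cap f(D))$ in a way that contributes a full unit to $\int_\gamma\tilde\rho\,ds$. Controlling the Hausdorff measure of these spherical preimages and relating it to arclength along $\gamma$ via the pointwise operator norm $\|f^{\,\prime}(x)\|$ is the technical heart of the argument; the subsequent area-formula bookkeeping is routine.
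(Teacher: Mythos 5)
Your proposal follows the ``direct'' route: lift an admissible weight from the image by setting $\tilde\rho(x)=\rho(f(x))\,\Vert f^{\,\prime}(x)\Vert$ and test it on the paths of $\Gamma_f(y_0,C_1,C_2)$. The step you yourself flag as the main obstacle is in fact a genuine gap, and under the stated hypotheses it cannot be closed: to get $\int_\gamma\tilde\rho\,ds\geqslant\int_{f\circ\gamma}\rho\,ds\geqslant\int_{r_1}^{r_2}\eta(r)\,dr$ you need $f$ to be absolutely continuous along $\alpha$-almost every path of the family (an ACP/Fuglede-type property, normally inherited from an ACL or Sobolev assumption), and nothing of the sort is assumed here. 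The theorem only assumes differentiability a.e., the $N$-Luzin property, condition~(\ref{eq2}), and the $N^{\,-1}$-property on a.e.\ sphere $S(y_0,r)\cap f(D)$ with respect to ${\mathcal H}^{n-1}$; these give control of $(n-1)$-dimensional Hausdorff measure on the preimages of spheres, not of length along curves, so the exceptional path family where your length inequality fails cannot be shown to have zero $\alpha$-modulus. That is precisely why the paper does not argue on paths at all: it passes, via the Hesse--Shlyk equality $M_\alpha(\Gamma(E,F,D))=C_\alpha[D,E,F]$ and Ziemer's duality~(\ref{eq3}), to the dual modulus $\widetilde{M}_{p/(n-1)}$ of the family of sets $f^{\,-1}(S(y_0,r)\cap f(D))$ separating $\overline{f^{\,-1}(C_1)}$ from $\overline{f^{\,-1}(C_2)}$, proves the key Lemma~\ref{lem1} (a lower bound for this dual modulus obtained by Federer's change-of-variables on the countably many Borel pieces where $f$ is bilipschitz --- exactly where a.e.\ differentiability, the $N$-property, (\ref{eq2}) and the spherical $N^{\,-1}$-property are used), and then concludes with the spherical-mean estimate of Proposition~\ref{pr1} and Remark~\ref{rem1}. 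Your surface-type hypotheses are tailored to that dual argument, not to the path argument you propose.

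A secondary but real problem is your bookkeeping with the multiplicity. Putting the factor $1/N(f,D)$ inside $\rho$ ruins admissibility (you only get $\int_\gamma\tilde\rho\,ds\geqslant 1/N(f,D)$, not $\geqslant 1$), and the claimed cancellation of $1/N^{\alpha}(f,D)$ against ``the $N(f,D)$ terms of the sum'' double counts: the area formula already produces $Q(y)=\sum_{x\in f^{\,-1}(y)}K_{O,\alpha}(x,f)$, with no additional factor $N(f,D)$ to spare. In the paper the factor $N^{\alpha}(f,D)$ enters for a different reason, through the multiplicity of the surfaces in the dual-modulus estimate of Lemma~\ref{lem1} (the passage from $\int_{S_r\cap f(D)}\tilde\rho^{\,n-1}d{\mathcal A_*}$ to the sum over the pieces $B_k$), not through any cancellation of the kind you describe. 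So even granting your admissibility claim, the constant analysis as written would not be correct, although the final inequality you aim at is weaker than what the direct computation would give and could be repaired; the admissibility gap, however, is fatal to the approach under these hypotheses.
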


\medskip
\begin{corollary}\label{cor2}
{\sl The assertion of Theorem~\ref{th1} holds if instead of the
condition~(\ref{eq2}) a stronger condition is required: $J(x, f) \ne
0$ almost everywhere.}
\end{corollary}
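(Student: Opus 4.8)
\proof
The plan is to obtain the assertion directly from Theorem~\ref{th1}. The corollary keeps all the hypotheses of that theorem except that it replaces condition~(\ref{eq2}) by the stronger requirement $J(x,f)\ne 0$ for almost every $x\in D$; in particular $f$ is still assumed to be open, discrete and closed, differentiable almost everywhere, to possess the $N$-Luzin property with respect to the Lebesgue measure in ${\Bbb R}^n$, to have $\overline D$ compact, to satisfy the $N^{\,-1}$-property on $S(y_0,r)\cap f(D)$ for almost all $r$ relative to ${\mathcal H}^{n-1}$, and to have $Q\in L^1(f(D))$. Hence it suffices to check that, together with these retained hypotheses, the new assumption implies~(\ref{eq2}); then Theorem~\ref{th1} applies verbatim and gives the inverse Poletsky inequality with $Q_*(y):=N^{\alpha}(f,D)\cdot Q(y)$.

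First I would put $Z:=\{x\in D:\ J(x,f)=0\}$. The assumption $J(x,f)\ne 0$ almost everywhere means exactly $m(Z)=0$. Since $f$ has the $N$-Luzin property with respect to the Lebesgue measure, applying the definition of this property to the set $E=Z$ yields $m(f(Z))=0$, which is precisely condition~(\ref{eq2}). This completes the verification: all hypotheses of Theorem~\ref{th1} being fulfilled, that theorem furnishes the desired inequality.

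There is essentially no obstacle in the corollary itself once Theorem~\ref{th1} is available; the single point deserving care is that~(\ref{eq2}) is a statement about the \emph{image} $f(Z)$ rather than about $Z$, so it is the $N$-Luzin hypothesis --- carried over from Theorem~\ref{th1} --- that bridges the two, and without it the implication would fail. The genuine difficulty lies entirely in the proof of Theorem~\ref{th1} (controlling $M_{\alpha}(\Gamma_f(y_0,C_1,C_2))$ by an integral of $Q$ over the ring $A(y_0,r_1,r_2)$ through the behaviour of $f$ on concentric spheres and the area/co-area formulas, with~(\ref{eq2}) used to neutralise the degenerate set $Z$). We also note that~(\ref{eq2}) is strictly weaker than $J(x,f)\ne 0$ a.e., since $Z$ may carry positive Lebesgue measure while $m(f(Z))=0$; thus Corollary~\ref{cor2} describes a proper subclass of the mappings covered by Theorem~\ref{th1}, one that nevertheless contains many natural examples (e.g.\ nonconstant quasiregular mappings), which is what makes the special case worth recording.
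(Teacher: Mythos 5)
Your proposal is correct and coincides with the paper's own argument: the paper derives Corollary~\ref{cor2} from Theorem~\ref{th1} together with the observation (made in the proof of Corollary~\ref{cor1}) that $J(x,f)\ne 0$ almost everywhere combined with the $N$-Luzin property yields $m\bigl(f(\{x\in D: J(x,f)=0\})\bigr)=0$, i.e.\ condition~(\ref{eq2}). Your verification is exactly this reduction, so no further comment is needed.
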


\section{Distortion of families of sets under mappings}

Let us give some important information concerning the relationship
between the moduli of the families of paths joining the sets and the
moduli of the families of the sets separating these sets. Mostly
this information can be found in Ziemer's publication,
see~\cite{Zi$_1$}. Let $G$ be a bounded domain in ${\Bbb R}^ n,$ and
$C_0, C_1$ are disjoint compact sets in $\overline{G}.$ Put $R=G
\setminus (C_{0} \cup C_{1})$ and $R^{\,*}=R \cup C_{0}\cup C_{1}.$
For a number $p>1 $ we define a {\it $p$ -capacity of the pair $C_0,
C_1 $ relative to the closure $G$} by the equality
$$C_p[G, C_0, C_1] = \inf \int\limits_{R} |\nabla u|^p\, dm(x),$$
where the exact lower bound is taken for all functions $u,$
continuous in $R^{\,*},$ $u\in ACL(R),$ such that $u=1$ on $C_1$ and
$u=0$ on $C_0.$ These functions are called {\it admissible} for
$C_p[G, C_0, C_1].$ We say that a set $\sigma \subset {\Bbb R}^n$
{\it separates} $C_0$ and $C_1$ in $R^{\,*},$ if $\sigma \cap R$ is
closed in $R$ and there are disjoint sets $A$ and $B,$ open relative
$R^{\,*}\setminus \sigma,$ such that $R^{\,*}\setminus \sigma=A\cup
B,$ $C_0\subset A$ and $C_1\subset B.$ Let $\Sigma$ denotes the
class of all sets that separate $C_0$ and $C_1$ in $R^{\,*}.$ For
the number $p^{\prime}=p/(p- 1)$ we define the quantity
\begin{equation}\label{eq13.4.12}
\widetilde{M_{p^{\prime}}}(\Sigma)=\inf\limits_{\rho\in
\widetilde{\rm adm} \Sigma} \int\limits_{{\Bbb
R}^n}\rho^{\,p^{\prime}}dm(x)
\end{equation}
where the notation $\rho\in \widetilde{\rm adm}\,\Sigma$ denotes
that $\rho$ is nonnegative Borel function in ${\Bbb R}^n$ such that
\begin{equation} \label{eq13.4.13}
\int\limits_{\sigma \cap R}\rho\, d{\mathcal H}^{n-1} \geqslant
1\quad\forall\, \sigma \in \Sigma\,. \end{equation}
Note that according to the result of Ziemer
\begin{equation}\label{eq3}
\widetilde{M_{p^{\,\prime}}}(\Sigma)=C_p[G , C_0 ,
C_1]^{\,-1/(p-1)}\,,
\end{equation}
see~\cite[Theorem~3.13]{Zi$_1$} for $p=n$ and \cite[p.~50]{Zi$_2$}
for $1<p<\infty,$ in addition, by the Hesse result
\begin{equation}\label{eq4}
M_p(\Gamma(E, F, D))= C_p[D, E, F]\,,
\end{equation}
where $(E \cup F)\cap
\partial D = \varnothing$ (see~\cite[Theorem~5.5]{Hes}). Shlyk has proved that the
requirement $(E \cup F)\cap
\partial D = \varnothing$ can be omitted, in other words, the equality~(\ref{eq4})
holds for any disjoint non-empty sets $E, F\subset \overline{D}$
(see~\cite[Theorem~1]{Shl}).

\medskip
Let $S$ be a surface, in other words, $S:D_s\rightarrow {\Bbb R}^n$
be a continuous mapping of an open set $D_s\subset {\Bbb R}^{n-1}.$
We put
$ N(y, S)={\rm card}\, S^{-1}(y)={\rm card} \{x\in D_s: S(x)=y\} $
and recall this function a {\it multiplicity function} of the
surface $S$ with respect to a point $y\in{\Bbb R}^n.$ Given a Borel
set $B\subset {\Bbb R}^n,$ its $(n-1)$-measured Hausdorff area
associated with the surface $S$ is determined by the formula
$ {\mathcal A}_S(B)={\mathcal A}_S^{n-1}(B)= \int\limits_B N(y, S)\,
d{\mathcal H}^{n-1} y, $
see~\cite[item~3.2.1]{Fe}. For a Borel function $\rho:\,{\Bbb
R}^n\rightarrow [0, \ infty]$ its integral over the surface $S$ is
determined by the formula
$ \int\limits_S \rho\, d{\mathcal A}=\int\limits_{{\Bbb R}^n}
\rho(y) N(y, S)\,d{\mathcal H}^{n-1} y. $
In what follows, $J_kf(x)$ denotes the {\it $k$-dimensional
Jacobian} of the mapping $f$ at a point $x$ (see \cite[$\S\,3.2,$
Ch.~3]{Fe}).

\medskip
Let $n\geqslant 2,$ and let $\Gamma$  be a family of surfaces $S.$ A
Borel function $\rho\colon{\Bbb R}^n\rightarrow\overline{{\Bbb
R}^+}$ is called {\it an admissible} for $\Gamma,$ abbr.
$\rho\in{\rm adm}\,\Gamma,$ if
\begin{equation}\label{eq8.2.6}\int\limits_S\rho^{n-1}\,
d{\mathcal{A}}\geqslant 1\end{equation} for any $S\in\Gamma.$ Given
$p\in(1,\infty),$ a {\it $p$-modulus} of $\Gamma$ is called the
quantity
$$M_p(\Gamma)=\inf_{\rho\in{\rm adm}\,\Gamma} \int\limits_{{\Bbb
R}^n}\rho^p(x)\,dm(x)\,.$$ We also set
$M(\Gamma):=M_n(\Gamma).$
Let us say that some property $P$ holds for {\it $p$-almost all
surfaces} of the domain $D,$ if this property holds for all surfaces
in $D,$ except, maybe be, some of their subfamily, $p$ -modulus of
which is zero. If we are talking about the conformal modulus
$M(\Gamma):=M_n(\Gamma),$ the prefix ''$n$'' in the expression
''$n$-almost all'', as a rule, is omitted. We say that a Lebesgue
measurable function $\rho\colon{\Bbb R}^n\rightarrow\overline{{\Bbb
R}^+}$ is {\it $p$-extensively admissible} for the family $\Gamma$
of surfaces $S$ in ${\Bbb R}^n,$ abbr. $\rho\in{\rm ext}_p\,{\rm
adm}\,\Gamma,$ if the relation~(\ref{eq8.2.6}) is satisfied for
$p$-almost all surfaces $S$ of the family $\Gamma.$ The proof of the
following lemma is based on the approach, used in establishing the
relationship of Orlicz-Sobolev classes with lower estimates of the
distortion of the modulus of surface families (see, eg,
\cite[Theorem~5]{KRSS} and \cite[Theorem~4]{Sev$_1$}). In such a
general formulation, this lemma is proved for the first time in this
paper.

\medskip
\begin{lemma}\label{lem1} {\sl Let $p>n-1,$ $f:D\rightarrow{\Bbb R}^n$
be a mapping that is differentiable almost everywhere and has
$N$-Luzin property with respect to the Lebesgue measure in ${\Bbb
R}^n,$ let $N(f, D)<\infty$ and let $y_0\in
\overline{f(D)}\setminus\{\infty\},$ $r_0=\sup\limits_{y\in
f(D)}|y-y_0|,$ $0<\varepsilon_0<r_0,$ $0<\varepsilon<\varepsilon_0.$
Suppose that the condition~(\ref{eq2}) is also satisfied. Fix
$\varepsilon>0,$ and denote by $\Sigma_{\varepsilon}$ the family of
all sets of the form
\begin{equation}\label{eq12}
\{f^{\,-1}(S(y_0, r)\cap f(D))\},\quad r\in (\varepsilon, r_0)\,.
\end{equation}
Suppose, in addition, that $f$ has $N^{\,- 1}$-property on $ S(y_0,
r)\cap f(D)$ for almost all $r\in(\varepsilon, r_0 )$ relative to
the Hausdorff measure ${\mathcal H}^{n-1}$ on $S(y_0, r).$ Then
\begin{equation}\label{eq13}
\widetilde{M}_{\frac{p}{n-1}}(\Sigma_{\varepsilon})\geqslant\frac{1}{N^{\frac{p}{n-1}}(f,
D)}\inf\limits_{\rho\in{\rm
ext\,adm}_p\,f(\Sigma_{\varepsilon})}\int\limits_{f(D)\cap A(y_0,
\varepsilon,
r_0)}\frac{\rho^p(y)}{Q^{\frac{p-n+1}{n-1}}(y)}\,dm(y)\,,
\end{equation}
where \begin{equation}\label{eq1C} Q(y):=K_{I, \alpha}(y,
f^{\,-1})=\sum\limits_{x\in f^{\,-1}(y)}{K_{O, \alpha}(x, f)}\,,
\end{equation}
and $\alpha=\frac{p}{p-n+1}.$}
\end{lemma}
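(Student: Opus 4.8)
\textit{Plan of the proof.} Fix an arbitrary $\rho_*\in\widetilde{\rm adm}\,\Sigma_{\varepsilon}$, so that $\int_{\sigma}\rho_*\,d{\mathcal H}^{n-1}\geqslant 1$ for every $\sigma\in\Sigma_{\varepsilon}$. From $\rho_*$ I will build a nonnegative Borel function $\rho$ on ${\Bbb R}^n$ with $\rho\in{\rm ext\,adm}_p\,f(\Sigma_{\varepsilon})$ and
$$\int\limits_{f(D)\cap A(y_0,\varepsilon,r_0)}\frac{\rho^{\,p}(y)}{Q^{\frac{p-n+1}{n-1}}(y)}\,dm(y)\ \leqslant\ N^{\frac{p}{n-1}}(f,D)\int\limits_{{\Bbb R}^n}\rho_*^{\,\frac{p}{n-1}}(x)\,dm(x)\,.$$
Passing to the infimum over such $\rho$, and then over $\rho_*$, turns this into~(\ref{eq13}).

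The first and principal step is to describe, for almost all $r\in(\varepsilon,r_0)$, the structure of the set $\sigma_r:=f^{\,-1}(S(y_0,r)\cap f(D))$. Let $B\subset D$ be the union of the set of non-differentiability of $f$ and of $\{x:J(x,f)=0\}$. By the $N$-Luzin property and~(\ref{eq2}), $m(f(B))=0$; writing Lebesgue measure in spherical coordinates about $y_0$ we get ${\mathcal H}^{n-1}(f(B)\cap S(y_0,r))=0$ for a.e.\ $r$, and then the $N^{\,-1}$-property of $f$ on $S(y_0,r)\cap f(D)$ yields ${\mathcal H}^{n-1}(B\cap\sigma_r)=0$ (because $B\cap\sigma_r\subset f^{\,-1}(f(B)\cap S(y_0,r))$). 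Combining this with the co-area formula on the locally Lipschitz pieces of $f$ (the usual Lipschitz decomposition of an almost everywhere differentiable map), one obtains, just as in \cite[Theorem~5]{KRSS} and \cite[Theorem~4]{Sev$_1$}, that for a.e.\ $r$ the set $\sigma_r$ is $(n-1)$-rectifiable, that ${\mathcal H}^{n-1}$-a.e.\ $x\in\sigma_r$ is a point of differentiability of $f$ with $J(x,f)\ne 0$ and approximate tangent hyperplane $T_x\sigma_r=(f^{\,\prime}(x))^{-1}\big(e_x^{\,\perp}\big)$, $e_x:=(f(x)-y_0)/|f(x)-y_0|$, and that there
$$J_{n-1}(f|_{\sigma_r})(x)\ =\ \frac{|J(x,f)|}{|(f^{\,\prime}(x))^{\,*}e_x|}\,.$$
This step --- verifying the rectifiability of $\sigma_r$ and the formula for its area Jacobian from the hypotheses alone --- is the main obstacle.

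Granting it, fix such an $r$ and apply the area formula to $f|_{\sigma_r}\colon\sigma_r\rightarrow S(y_0,r)\cap f(D)$ with integrand $\rho_*(x)/J_{n-1}(f|_{\sigma_r})(x)$, summing only over the regular preimages. Since the multiplicity of $f|_{\sigma_r}$ is at least $1$ on its image,
$$\int\limits_{S(y_0,r)\cap f(D)}\Bigg(\sum\limits_{x\in f^{\,-1}(y)}\frac{\rho_*(x)}{J_{n-1}(f|_{\sigma_r})(x)}\Bigg)d{\mathcal H}^{n-1}(y)\ \geqslant\ \int\limits_{\sigma_r}\rho_*\,d{\mathcal H}^{n-1}\ \geqslant\ 1\,.$$
Hence, setting $\rho(y):=\Big(\sum_{x\in f^{\,-1}(y)}\rho_*(x)/J_{n-1}(f|_{\sigma_{|y-y_0|}})(x)\Big)^{1/(n-1)}$ (a routine measurability check shows $\rho$ may be taken Borel), we obtain $\int_S\rho^{\,n-1}\,d{\mathcal A}\geqslant 1$ for $p$-almost all $S\in f(\Sigma_{\varepsilon})$, the exceptional $r$ forming a Lebesgue-null and hence $p$-modulus-null subfamily; thus $\rho\in{\rm ext\,adm}_p\,f(\Sigma_{\varepsilon})$.

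It remains to estimate the weighted integral of $\rho^{\,p}$. As ${\rm card}\,f^{\,-1}(y)\leqslant N(f,D)$ and $\frac{p}{n-1}\geqslant 1$, the elementary inequality $\big(\sum_{j=1}^{k}a_j\big)^{p/(n-1)}\leqslant k^{\frac{p-n+1}{n-1}}\sum_{j=1}^{k}a_j^{p/(n-1)}$ gives $\rho^{\,p}(y)\leqslant N^{\frac{p-n+1}{n-1}}(f,D)\sum_{x\in f^{\,-1}(y)}\big(\rho_*(x)/J_{n-1}(f|_{\sigma_{|y-y_0|}})(x)\big)^{p/(n-1)}$. The $n$-dimensional area formula (valid since $f$ is a.e.\ differentiable with the $N$-property) now rewrites $\int_{f(D)\cap A(y_0,\varepsilon,r_0)}(\,\cdot\,)\,dm(y)$ as $\int_{D}(\,\cdot\,)\,|J(x,f)|\,dm(x)$, the set $B$ contributing nothing by~(\ref{eq2}) and the $N$-property. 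Using $|(f^{\,\prime}(x))^{\,*}e_x|\leqslant\Vert f^{\,\prime}(x)\Vert$ together with $\alpha=\frac{p}{p-n+1}$, so that $\alpha\cdot\frac{p-n+1}{n-1}=\frac{p}{n-1}$,
$$\frac{|J(x,f)|}{J_{n-1}(f|_{\sigma})(x)^{\,p/(n-1)}}\ \leqslant\ \frac{\Vert f^{\,\prime}(x)\Vert^{\,p/(n-1)}}{|J(x,f)|^{\frac{p-n+1}{n-1}}}\ =\ \left(\frac{\Vert f^{\,\prime}(x)\Vert^{\,\alpha}}{|J(x,f)|}\right)^{\frac{p-n+1}{n-1}}=\ K_{O,\alpha}^{\frac{p-n+1}{n-1}}(x,f)\,,$$
and since $x\in f^{\,-1}(f(x))$ we have $K_{O,\alpha}(x,f)\leqslant Q(f(x))$, so the factor $\big(K_{O,\alpha}(x,f)/Q(f(x))\big)^{\frac{p-n+1}{n-1}}\leqslant 1$. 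What is left is exactly $N^{\frac{p-n+1}{n-1}}(f,D)\int_{{\Bbb R}^n}\rho_*^{\,p/(n-1)}(x)\,dm(x)\leqslant N^{\frac{p}{n-1}}(f,D)\int_{{\Bbb R}^n}\rho_*^{\,p/(n-1)}(x)\,dm(x)$, which is the displayed bound; taking infima completes the argument.
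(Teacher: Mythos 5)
Your argument is correct and is essentially the paper's own proof: both rest on decomposing the set where $f$ is differentiable with $J(x,f)\ne 0$ into countably many Borel pieces on which $f$ is bi\-Lipschitz, on the observation (via the $N$-property, condition~(\ref{eq2}), Fubini and the $N^{\,-1}$-property on spheres) that the non-regular set meets $f^{\,-1}(S(y_0,r))$ in an ${\mathcal H}^{n-1}$-null set for a.e.\ $r$, on the $(n-1)$-dimensional area formula over these preimages to manufacture a function in ${\rm ext\,adm}_p\,f(\Sigma_{\varepsilon})$ from an admissible function for $\Sigma_{\varepsilon}$, and on the $n$-dimensional change of variables to bound its weighted $L^p$-integral by $\int_D\rho_*^{\,p/(n-1)}\,dm$. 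The only (harmless) deviations are that you sum over preimages and invoke the power-mean inequality with the factor $N^{\frac{p-n+1}{n-1}}(f,D)$ where the paper takes the supremum of the branch values and inserts $1/N(f,D)$ under the spherical integral, and that you use the exact identity $J_{n-1}(f|_{\sigma_r})(x)=|J(x,f)|/|(f^{\,\prime}(x))^{\,*}e_x|$ where the paper settles for the singular-value estimate $J_{n-1}f(x)\geqslant\lambda_1(x)\cdots\lambda_{n-1}(x)$ of~(\ref{eq1B}); both yield the same bound after majorizing by $\Vert f^{\,\prime}(x)\Vert$.
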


\begin{proof} Without loss of generality, we may assume that $r_0>0.$
We will generally follow the methodology set forth in proving
\cite[Theorem~5]{KRSS} (see also \cite[Theorem~8.6]{MRSY}).

\medskip
Denote by $B$ a Borel set of all points $x\in D,$ where the mapping
$f$ has a total differential $f^{\,\prime}(x)$ and $J(x, f)\ne 0.$
By Kirsbraun's theorem and by the unity of the approximate
differential (see, for example, \cite[2.10.43 and
Theorem~3.1.2]{Fe}) it follows that the set $B$ is a countable union
of Borel sets $B_k,$ $k=1,2, \ldots ,$ such that the mappings
$f_k=f|_{B_k}$ are Bilipschitz homeomorphisms (see \cite[Lemma~3.2.2
and Theorems 3.1.4 and 3.1.8]{Fe}). Without loss of generality, we
may assume that the sets $B_k$ are disjoint. We also denote by $B_*$
the set of all points $x\in D,$ where $f$ has a total differential,
but $J(x, f)=0.$

\medskip Since the set $B_0:=D\setminus (B\cup B_*)$ has a Lebesgue measure zero,
and the mapping $f$ has $N$-Luzin property, then $m(f(B_0))= 0.$
By~\cite[Theorem~9.3]{MRSY} ${\mathcal A}_{S_r}(f(B_0))=0$ for
$p$-almost all spheres $S_r:=S(y_0, r)\cap f (D)$ centered at a
point $y_0,$ where ''almost all'' is understood in the sense of
$p$-modulus of families of surfaces. Note that, the function
$\psi(r):={\mathcal H}^{n-1}(f (B_0)\cap S_r)$ is Lebesgue due to
the Fubini theorem  (\cite[Section~8.1, Ch.~III]{Sa}). Thus, the set
$E\subset {\Bbb R} $ of all $r\in {\Bbb R}$ such that ${\mathcal
H}^{n-1}(f(B_0)\cap S_r)=0,$ is Lebesgue measurable. Then by
\cite[Lemma~4.1]{IS} ${\mathcal A}_{S_r}(f(B_0))=0$ for almost all
spheres $S_r:=S(y_0, r)$ centered at the point $y_0,$ where ''almost
all'' is understood in the sense of a one-dimensional Lebesgue
measure with respect to the parameter $r\in (\varepsilon, r_0).$
Now, by the assumption of Lemma,
\begin{equation}\label{eq3B}
{\mathcal H}^{n-1}(f^{\,-1}(S_r)\cap B_0)=0
\end{equation}
for almost all $r\in (\varepsilon, \varepsilon_0).$ Arguing
similarly, we obtain that
\begin{equation}\label{eq4B}
{\mathcal H}^{n-1}(f^{\,-1}(S_r)\cap B_*)=0
\end{equation}
for almost all $r\in (\varepsilon, \varepsilon_0).$

\medskip
Let $\rho^{n-1}\in \widetilde{{\rm adm}}\,\Sigma_{\varepsilon}$ and
let
\begin{equation}\label{eq8.3.12}\tilde{\rho}(y)\ =\begin{cases}
\sup\limits_{x\in f^{\,-1}(y)\cap D\setminus B_0}\rho_*(x)\,,& y\in
f(D)\setminus f(B\cap B_*)\,\\
0\,,&y\in f(B\cap B_*)
\end{cases}\,,
\end{equation}
where
\begin{equation}\label{eq8.3.13}\rho_*(x)=\left \{\begin{array}{rr}
\rho(x)\cdot \left(\frac{\Vert f^{\,\prime}(x)\Vert}{J(x, f)}
\right)^{1/(n-1)}, &  x\in D\setminus B_0, \\
0, & \text{otherwise}\end{array}\right. \end{equation}
Observe that $\tilde{\rho}=\sup\rho_k,$ where
\begin{equation}\label{eq8.3.14} \rho_k(y)\ =\ \left
\{\begin{array}{rr} \rho_*(f^{\,-1}_k(y)), &{\rm при }\ y\in f(B_k),\\
0, & \text{otherwise}\end{array}\right.\end{equation} and, moreover,
each mapping $f_k=f|_{B_k},$ $k=1,2,\ldots ,$ is injective. Thus, a
function $\tilde{\rho}$ is Borel (see, e.g.,
\cite[Theorem~I~(8.5)]{Sa}).

Let $f^{\,-1}(S_r):=S^{\,*}_r.$ Then
$$\int\limits_{S_r\cap f(D)}{\widetilde{\rho}}^{\,n-1}(y)\,d{\mathcal A_*}=
\int\limits_{{\Bbb R}^n}{\widetilde{\rho}}^{\,n-1}(y)\chi_{S_r\cap
f(D)}(y)\,\,d{\mathcal H}^{n-1}y\geqslant
$$
$$\geqslant\int\limits_{{\Bbb R}^n}\frac{1}{N(f,
D)}\cdot\sum\limits_{k=1}^{\infty}{\widetilde{\rho}}^{\,n-1}(y)\chi_{S_r\cap
f(D)}(y) N(y, f, B_k\cap S^{\,*}_r)\,d{\mathcal H}^{n-1}y=$$
\begin{equation}\label{eq1A}
=\frac{1}{N(f, D)}\sum\limits_{k=1}^{\infty}\int\limits_{{\Bbb
R}^n}{\rho_*}^{n-1}(f_k^{\,-1}(y))N(y, f, B_k\cap
S^{\,*}_r)\,d{\mathcal H}^{n-1}y=
\end{equation}
$$= \frac{1}{N(f,
D)}\sum\limits_{k=1}^{\infty} \int\limits_{f(B_k\cap
S^{\,*}_r)}{\rho_*}^{n-1}(f_k^{\,-1}(y))\,d{\mathcal H}^{n-1}y\,.$$

Let $\lambda_1(x),\lambda_2(x),\ldots, \lambda_n(x)$ are the main
stretchings of the mapping $f,$ see e.g. \cite[Lem\-mas~4.1.I,
4.2.I]{Re}. Then $J(x, f)=\lambda_1(x)\cdots\lambda_n(x)$ and
\begin{equation}\label{eq1B}
\left(\frac{\Vert f^{\,\prime}(x)\Vert}{J(x, f)}
\right)^{1/(n-1)}=\left(\frac{1}{\lambda_1(x)\ldots
\lambda_{n-1}(x)}\right)^{\frac{1}{n-1}}\geqslant
\left(\frac{1}{J_{n-1}f(x)}\right)^{\frac{1}{n-1}}\,.
\end{equation}
Due to~(\ref{eq3B}), (\ref{eq4B}) and~(\ref{eq1B}), by
\cite[Corollary~3.2.20]{Fe} for $m=n-1,$ we obtain that
$$
\sum\limits_{k=1}^{\infty}\int\limits_{f(B_k\cap
S^{\,*}_r)}{\rho_*}^{n-1}(f_k^{\,-1}(y))\,d{\mathcal
H}^{n-1}y=\sum\limits_{k=1}^{\infty}\int\limits_{B_k\cap S^{\,*}_r}
{\rho_*}^{n-1}(x)\,\,J_{n-1}f(x)\,d{\mathcal H}^{n-1}x\,=$$
$$=\sum\limits_{k=1}^{\infty}\int\limits_{B_k\cap
S^{\,*}_r} \frac{\rho^{n-1}(x)\Vert f^{\,\prime}(x)\Vert}{J(x,
f)}\,J_{n-1}f(x)\,d{\mathcal H}^{n-1}x\,\geqslant $$
\begin{equation}\label{eq3A}
\geqslant \sum\limits_{k=1}^{\infty}\int\limits_{B_k\cap
S^{\,*}_r}\rho^{n-1}(x)\, \,d{\mathcal
H}^{n-1}x=\int\limits_{f^{\,-1}(S_r)}\rho^{n-1}(x) \,d{\mathcal
H}^{n-1}x\geqslant 1
\end{equation}
for almost any $S_r=f\circ S^{\,*}_r\in f(\Sigma_{\varepsilon}).$ It
follows from~(\ref{eq3A}) that $N^{\frac{1}{n-1}}(f,
D)\tilde{\rho}\in {\rm ext\,adm}_p\,f(\Sigma_{\varepsilon})$
(see~\cite[Lemma~4.1]{IS}).

\medskip
Since  $\tilde{\rho}^p(y)=\sup\limits_{k\in {\Bbb
N}}\rho^p_k(y)\leqslant\sum\limits_{k=1}^{\infty}\rho^p_k(y)$ and
$m(f(B_*))=m(f(B_0))=0,$ then
$$\int\limits_{f(D)}\frac{\tilde{\rho}^p(y)}{Q(y)}\,dm(y)\leqslant
\sum\limits_{k=1}^{\infty}\int\limits_{f(B_k)}
\frac{\rho^p_k(y)}{Q(y)}\,dm(y)\leqslant
\sum\limits_{k=1}^{\infty}\int\limits_{f(B_k)}
\frac{\rho^p_k(y)}{K^{\frac{p-n+1}{n-1}}_{O, \alpha}(f_k^{\,-1}(y),
f)}\,dm(y)\,.$$
Using the change of variables formula on each $B_k,$ $k=1,2,
\ldots,$ see, for example, \cite[Theorem~3.2.5]{Fe}, we obtain that
$$\int\limits_{f(B_k)}
\frac{\rho^p_k(y)}{K^{\frac{p-n+1}{n-1}}_{O, \alpha}(f_k^{\,-1}(y),
f)}\,dm(y)=$$$$=
\int\limits_{f(B_k)}\frac{\rho^p(f_k^{\,-1}(y))J^{\frac{p-n+1}{n-1}}(f_k^{\,-1}(y),
f)} {{\Vert f^{\,\prime}(f_k^{\,-1}(y))\Vert}^{\frac{p}{p-n+1}\cdot
\frac{p-n+1}{n-1}}}\cdot \frac{{\Vert
f^{\,\prime}(f_k^{\,-1}(y))\Vert}^{\frac{p}{n-1}}}{|J(f_k^{\,-1}(y),
f)|^{\frac{p}{n-1}}}\,dm(y)=$$
$$=\int\limits_{f(B_k)}\rho^p(f_k^{\,-1}(y))J(y,
f_k^{\,-1})\,dm(y)=\int\limits_{B_k}\rho^p(x)\,dm(x)\,.$$
The latter implies that
\begin{equation}\label{eq4A}
\int\limits_{f(D)}\frac{{\widetilde{\rho}}^p(y)}{Q^{\frac{p-n+1}{n-1}}(y)}\,dm(y)
\leqslant
\sum\limits_{k=1}^{\infty}\int\limits_{B_k}\rho^p(x)\,dm(x)\,.
\end{equation}
Summing~(\ref{eq4A}) by $k=1,2, \ldots $ and using the countable
additivity of the Lebesgue integral (see, for example,
\cite[Theorem~I.12.3]{Sa}), we obtain that
\begin{equation}\label{eq5A}
\int\limits_{f(D)}\frac{1}{Q^{\frac{p-n+1}{n-1}}(y)}{\widetilde{\rho}}^p(y)\cdot\,dm(y)
\leqslant\int\limits_D\rho^p(x)\,dm(x)\,.
\end{equation}
Going in the ratio~(\ref{eq5A}) to $\inf$ over all functions
$\rho^{n-1} \in \widetilde{{\rm adm}}\,\Sigma_{\varepsilon},$ we
obtain that
$$\int\limits_{f(D)}\frac{1}{Q^{\frac{p-n+1}{n-1}}(y){\widetilde{\rho}}^p(y)}\cdot\,dm(y)
\leqslant\widetilde{M_{\frac{p}{n-1}}}(\Sigma_{\varepsilon})\,,$$
whence we obtain that
$$\int\limits_{f(D)}\frac{N^{\frac{p}{n-1}}(f, D)}{Q^{\frac{p-n+1}{n-1}}(y)}
{\widetilde{\rho}}^p(y)\cdot\,dm(y) \leqslant N^{\frac{p}{n-1}}(f,
D)\cdot\widetilde{M_{\frac{p}{n-1}}}(\Sigma_{\varepsilon})\,.$$
Put $\widetilde{\rho}_1(y):=N^{\frac{1}{n-1}}(f, D)\cdot
\widetilde{\rho}(y).$ Due to the latter relation, we obtain that
\begin{equation}\label{eq10C}
\int\limits_{f(D)}
\frac{{\widetilde{\rho}_1}^p(y)}{Q^{\frac{p-n+1}{n-1}}(y)}\,dm(y)
\leqslant N^{\frac{p}{n-1}}(f,
D)\cdot\widetilde{M_{\frac{p}{n-1}}}(\Sigma_{\varepsilon})\,.
\end{equation}
Since by the above $\widetilde{\rho}_1(y)=N^{\frac{1}{n-1}}(f,
D)\tilde{\rho}\in {\rm ext\,adm}_p\,f(\Sigma_{\varepsilon}),$ it
follows from~(\ref{eq10C}) that the relation~(\ref{eq13}) holds.
Lemma is proved. $\Box$
\end{proof}

\medskip
We have the following simple consequence.

\medskip
\begin{corollary}\label{cor1}
{\sl Let $f:D\rightarrow {\Bbb R}^n$ be a map which is
differentiable almost everywhere, and has $N$ and $N^ {\,-1}$ Luzin
properties with respect to the Lebesgue measure. Let $y_0\in
\overline{f(D)}\setminus\{\infty\},$ $r_0=\sup\limits_{y\in
f(D)}|y-y_0|.$ We fix $\varepsilon>0,$ and denote by
$\Sigma_{\varepsilon}$ the family of all sets of the
form~(\ref{eq12}). In addition, suppose that $f$ has
$N^{\,-1}$-Luzin property on $S(y_0, r)\cap f(D)$ for almost all
$r\in (\varepsilon, \varepsilon_0)$ with respect to ${\mathcal
H}^{n-1}$ on $S(y_0, r).$ Then the relation~(\ref{eq13}) is
fulfilled,  where $Q$ is defined by the relation~(\ref{eq1}).}
\end{corollary}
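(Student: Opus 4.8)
The plan is to reduce Corollary~\ref{cor1} to Lemma~\ref{lem1} by checking that all hypotheses of the lemma hold under the present assumptions. The only hypotheses of Lemma~\ref{lem1} that are not explicitly listed in Corollary~\ref{cor1} are: the finiteness $N(f,D)<\infty$, and the condition~(\ref{eq2}) concerning the image of the zero set of the Jacobian. So first I would establish~(\ref{eq2}) directly from the $N$-Luzin property: the set $Z=\{x\in D: J(x,f)=0\}$ need not have measure zero in general, but here we are given in addition the $N^{\,-1}$-Luzin property. Actually the cleanest route is to observe that since $f$ is differentiable a.e.\ and has the $N$-property, one still has $m(f(Z))=0$ whenever $m(Z)=0$; and when $m(Z)>0$ one uses that at a.e.\ point $x$ of $Z$ the derivative $f^{\,\prime}(x)$ is singular, so $f$ maps a neighborhood-free ``thin'' set there — more precisely, by the area formula / Sard-type argument for a.e.-differentiable maps (e.g.\ \cite[Theorem~3.2.5]{Fe} applied on the Borel pieces where $f$ is Lipschitz, together with the $N$-property to kill the remainder), one gets $m(f(Z))=0$. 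In fact the standard fact is: a mapping differentiable a.e.\ with the $N$-property satisfies the change-of-variables formula, and then $\int_Z |J(x,f)|\,dm(x)=0$ forces $m(f(Z))=0$ up to the $N$-exceptional null set. This gives~(\ref{eq2}).

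Next I would address $N(f,D)<\infty$. Here one cannot expect it in general, so the resolution must be that Corollary~\ref{cor1} is to be read with this understood, OR that $f(\Sigma_\varepsilon)$ and the right-hand side of~(\ref{eq13}) are interpreted so that the inequality is trivial when $N(f,D)=\infty$ (the factor $1/N^{p/(n-1)}(f,D)$ being then $0$, making~(\ref{eq13}) read $\widetilde{M}_{p/(n-1)}(\Sigma_\varepsilon)\geqslant 0$, which always holds). I would state this convention explicitly and then, in the substantive case $N(f,D)<\infty$, simply invoke Lemma~\ref{lem1}. Note that the definition~(\ref{eq1}) of $Q$ already presupposes $N(f,D)<\infty$, so the corollary is really only claiming content in that case anyway; I would remark on this.

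With~(\ref{eq2}) verified and $N(f,D)<\infty$, the $N^{\,-1}$-Luzin property on the spheres $S(y_0,r)\cap f(D)$ for a.e.\ $r$ is exactly the remaining hypothesis of Lemma~\ref{lem1}, and it is assumed verbatim in Corollary~\ref{cor1}. The global $N^{\,-1}$-Luzin property assumed in the corollary is not even needed for the lemma, so it is only an extra (harmless) hypothesis — I would point this out but not use it, beyond noting it makes condition~(\ref{eq2}) automatic in a second way (if $m(E)=0$ then $m(f^{\,-1}(E))=0$; taking $E=f(Z)$ one would instead argue from the other direction, but this is not needed). Thus the conclusion~(\ref{eq13}) with $Q$ as in~(\ref{eq1}), $\alpha=p/(p-n+1)$, follows immediately by applying Lemma~\ref{lem1}.

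The main obstacle I anticipate is the verification of~(\ref{eq2}): making rigorous that an a.e.-differentiable map with the $N$-property sends the critical set to a Lebesgue-null set. The safe way is to decompose the critical set (intersected with the set of differentiability) into countably many Borel pieces on which $f$ is Lipschitz — using Kirszbraun's theorem and the structure theory in \cite[\S 3.2, Ch.~3]{Fe} exactly as in the proof of Lemma~\ref{lem1} — apply the area formula on each piece where $J=0$ to get image-measure zero, and dispose of the (measure-zero) set of non-differentiability via the $N$-property. Once this is in place, the corollary is a one-line consequence of Lemma~\ref{lem1}.
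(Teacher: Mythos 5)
Your reduction to Lemma~\ref{lem1} is correct, but you establish the key hypothesis~(\ref{eq2}) by a genuinely different route than the paper. The paper's own proof is a one-liner based on Ponomarev's theorem \cite[Theorem~1]{Pon}: the \emph{global} $N^{\,-1}$-Luzin property (which you set aside as ``harmless and unused'') implies $J(x,f)\ne 0$ almost everywhere, so the critical set $Z=\{x: J(x,f)=0\}$ is Lebesgue-null, and then the $N$-Luzin property gives $m(f(Z))=0$, i.e.~(\ref{eq2}); after that Lemma~\ref{lem1} applies verbatim. You instead derive $m(f(Z))=0$ directly by a Sard-type argument: decompose the set of differentiability points into countably many pieces on which $f$ is Lipschitz (Federer \cite[\S\,3.1--3.2]{Fe}) and apply the area formula on each piece, where the vanishing Jacobian forces the image to be null. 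That argument is sound (and is essentially the classical lemma that a map differentiable at every point of a set with vanishing Jacobian there maps it onto a null set), and it buys something the paper's proof does not: it shows the global $N^{\,-1}$ hypothesis is dispensable for the conclusion, whereas the paper's argument genuinely uses it via Ponomarev. The trade-off is that the paper's route is shorter and stays entirely within quoted results, while yours requires the Lipschitz-decomposition/area-formula machinery to be carried out carefully (your sketch of it is a bit loose, e.g.\ the ``change-of-variables forces $m(f(Z))=0$'' step should be phrased as the area inequality on the Lipschitz pieces plus the $N$-property only for the non-differentiability set). Your explicit handling of $N(f,D)<\infty$ is also more careful than the paper, which passes over it silently, relying on the fact that the definition~(\ref{eq1}) of $Q$ already presupposes this finiteness.
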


\medskip
\begin{proof}
Since $f$ has $N^{\,- 1}$-Luzin property, by Ponomarev's theorem we
have that $J (x, f)\ne 0$ almost everywhere (see, for example,
\cite[Theorem~1]{Pon}), we may assume that $J(x, f)\ne 0 $ on any
$B_k,$ $k=1,2, \ldots. $ Then, since the mapping $f$ has
$N$-property, the condition~(\ref{eq2}) is also fulfilled. The
desired statement, in this case, follows from
Lemma~\ref{lem1}.~$\Box$
\end{proof}

\section{Proof of the main result} Let $Q_*:D\rightarrow
[0,\infty]$ be a Lebesgue measurable function. Denote by
$q_{x_0}(r)$ the integral average of $Q_*(x)$ under the sphere
$|x-x_0|=r,$
\begin{equation}\label{eq3.1A}
q_{x_0}(r):=\frac{1}{\omega_{n-1}r^{n-1}}\int\limits_{|x-x_0|=r}Q_*(x)\,d\mathcal{H}^{n-1}\,,
\end{equation}
where $\omega_{n-1}$ denotes the area of the unit sphere in ${\Bbb
R}^n.$
Below we also assume that the following standard relations hold:
$a/\infty=0$ for $a\ne\infty,$ $a/0=\infty$ for $a> 0$ and $0\cdot
\infty=0$ (see, e.g., \cite[$\S\,3,$ section~I]{Sa}). The following
conclusion was obtained by V.~Ryazanov together with the author in
the case $p=n,$ see, e.g.,~\cite[Lemma~7.4]{MRSY} or
\cite[Lemma~2.2]{RS}. In the case of an arbitrary $p> 1,$ see, for
example, \cite[Lemma~2]{SalSev}.

\medskip
\begin{proposition}\label{pr1}
{\sl\, Let $p>1,$ $n\geqslant 2,$ $x_0 \in {\Bbb R}^n,$ $r_1, r_2\in
{\Bbb R},$ $r_1, r_2>0,$ and let $Q_*(x)$ be a Lebesgue measurable
function, $Q_*:{\Bbb R}^n\rightarrow [0, \infty],$ $Q_*\in L_{\rm
loc}^1({\Bbb R}^n).$ We put
$$
I=I(x_0,r_1,r_2)=\int\limits_{r_1}^{r_2}
\frac{dr}{r^{\frac{n-1}{p-1}}q_{x_0}^{\frac{1}{p-1}}(r)}\,,
$$
and let $q_{x_0}(r)$ be defined by~(\ref{eq3.1A}). Then
\begin{equation}\label{eq10A}
\frac{\omega_{n-1}}{I^{p-1}}\leqslant \int\limits_A Q_*(x)\cdot
\eta^p(|x-x_0|)\,dm(x)
\end{equation}
for any Lebesgue measurable function $\eta :(r_1,r_2)\rightarrow
[0,\infty]$ such that
\begin{equation}\label{eq10B} \int\limits_{r_1}^{r_2}\eta(r)\,dr=1\,,
\end{equation}
where $A=A(x_0, r_1,r_2)$ is defined in~(\ref{eq1**}).}
\end{proposition}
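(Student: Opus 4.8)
The plan is to reduce the inequality to a one‑dimensional Hölder estimate after disposing of the degenerate case. First I would observe that if the integral $I=I(x_0,r_1,r_2)$ equals $+\infty$, then $\omega_{n-1}/I^{p-1}=0$ by the convention $a/\infty=0$, so (\ref{eq10A}) holds trivially for any admissible $\eta$; hence we may assume $I<\infty$. Similarly, if $\int_A Q_*(x)\eta^p(|x-x_0|)\,dm(x)=\infty$ there is nothing to prove, so we may assume it is finite; in particular $q_{x_0}(r)<\infty$ for a.e.\ $r$, and the factor $r^{-(n-1)/(p-1)}q_{x_0}^{-1/(p-1)}(r)$ that defines the integrand of $I$ is positive a.e.\ on $(r_1,r_2)$.

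Next I would pass to polar coordinates. Writing $x=x_0+r\omega$ with $\omega\in S^{n-1}$, Fubini gives
\begin{equation}\label{eq:polar}
\int\limits_A Q_*(x)\,\eta^p(|x-x_0|)\,dm(x)=\int\limits_{r_1}^{r_2}\eta^p(r)\,r^{n-1}\Big(\int\limits_{S^{n-1}}Q_*(x_0+r\omega)\,d\mathcal H^{n-1}(\omega)\Big)dr=\omega_{n-1}\int\limits_{r_1}^{r_2}\eta^p(r)\,r^{n-1}q_{x_0}(r)\,dr\,,
\end{equation}
using the definition (\ref{eq3.1A}) of $q_{x_0}$, after rescaling the inner integral from the sphere of radius $r$ to the unit sphere (which accounts for the factor $r^{n-1}$). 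So it remains to show that for every admissible $\eta$ one has
\begin{equation}\label{eq:1D}
\frac{1}{I^{p-1}}\leqslant \int\limits_{r_1}^{r_2}\eta^p(r)\,r^{n-1}q_{x_0}(r)\,dr\,,
\end{equation}
a purely one‑dimensional statement.

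For (\ref{eq:1D}) I would use the classical variational trick: the left side is, up to the exponent, the optimal constant in this weighted inequality, attained by the specific choice $\eta_0(r):=\frac1I\,r^{-(n-1)/(p-1)}q_{x_0}^{-1/(p-1)}(r)$, which indeed satisfies $\int_{r_1}^{r_2}\eta_0(r)\,dr=1$ by the definition of $I$. For an arbitrary admissible $\eta$, apply Hölder's inequality with exponents $p$ and $p'=p/(p-1)$ to the factorization
$$\eta(r)=\Big(\eta(r)\,r^{(n-1)/p}q_{x_0}^{1/p}(r)\Big)\cdot\Big(r^{-(n-1)/p}q_{x_0}^{-1/p}(r)\Big)\,,$$
obtaining
$$1=\Big(\int\limits_{r_1}^{r_2}\eta(r)\,dr\Big)^p\leqslant\Big(\int\limits_{r_1}^{r_2}\eta^p(r)r^{n-1}q_{x_0}(r)\,dr\Big)\cdot\Big(\int\limits_{r_1}^{r_2}r^{-(n-1)/(p-1)}q_{x_0}^{-1/(p-1)}(r)\,dr\Big)^{p-1}=\Big(\int\limits_{r_1}^{r_2}\eta^p(r)r^{n-1}q_{x_0}(r)\,dr\Big)\cdot I^{p-1}\,,$$
which rearranges to (\ref{eq:1D}). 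Combining with (\ref{eq:polar}) yields (\ref{eq10A}). The only real care needed — the part I expect to be the main technical nuisance rather than a deep obstacle — is the bookkeeping with the extended‑real conventions: one must check that the second Hölder factor is exactly $I$ (not merely dominated by it), that measurability of $r\mapsto q_{x_0}(r)$ holds (which follows from Fubini applied to the locally integrable $Q_*$), and that the degenerate values $q_{x_0}(r)\in\{0,\infty\}$ on a set of positive measure are handled by the stated conventions so that Hölder still applies; none of these requires the finiteness reductions to be revisited, since those were made at the outset.
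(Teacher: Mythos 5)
Your proof is correct: the reduction to the degenerate cases, the polar--coordinate (Fubini) identity expressing the right--hand side of (\ref{eq10A}) as $\omega_{n-1}\int_{r_1}^{r_2}\eta^p(r)r^{n-1}q_{x_0}(r)\,dr$, and the H\"older estimate with exponents $p$ and $p/(p-1)$ give exactly the stated bound, with the conventions taking care of the sets where $q_{x_0}\in\{0,\infty\}$ (note only that $q_{x_0}(r)<\infty$ a.e.\ is best deduced from $Q_*\in L^1_{\rm loc}$ rather than from finiteness of the right--hand side). The paper itself offers no proof of Proposition~3.1, quoting it from \cite[Lemma~7.4]{MRSY}, \cite[Lemma~2.2]{RS} and \cite[Lemma~2]{SalSev}, and the arguments there are essentially this same extremal-function/H\"older computation, so your route coincides with the cited one.
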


\medskip
\begin{remark}\label{rem1}
Note that, if~(\ref{eq10A}) holds for any function $\eta$ with a
condition (\ref{eq10B}), then the same relationship holds for any
function $\eta$ with the condition~(\ref{eqA2}). Indeed, let $\eta$
be a nonnegative Lebesgue function that satisfies the condition
(\ref{eqA2}). If $J:=\int\limits_{r_1}^{r_2}\eta(t)\,dt<\infty,$
then we put $\eta_0:=\eta/J.$ Obviously, the function $\eta_0$
satisfies condition~(\ref{eq10B}). Then the relation~(\ref{eq10A})
gives that
$$\frac{\omega_{n-1}}{I^{p-1}}\leqslant \frac{1}{J^p}\int\limits_A Q_*(x)\cdot
\eta^p(|x-x_0|)\,dm(x)\leqslant \int\limits_A Q_*(x)\cdot
\eta^p(|x-x_0|)\,dm(x)$$
because $J\geqslant 1.$ Let now $J=\infty.$ Then, by
\cite[Theorem~I.7.4]{Sa}, a function $\eta$ is a limit of a
nondecreasing nonnegative sequence of simple functions $\eta_m,$
$m=1,2,\ldots .$ Set
$J_m:=\int\limits_{r_1}^{r_2}\eta_m(t)\,dt<\infty$ and
$w_m(t):=\eta_m(t)/J_m.$ Then, it follows from~(\ref{eq10B}) that
\begin{equation}\label{eq11A}
\frac{\omega_{n-1}}{I^{p-1}}\leqslant \frac{1}{J_m^p}\int\limits_A
Q_*(x)\cdot \eta_m^p(|x-x_0|)\,dm(x)\leqslant \int\limits_A
Q_*(x)\cdot \eta_m^p(|x-x_0|)\,dm(x)\,,
\end{equation}
because $J_m\rightarrow J=\infty$ as $m\rightarrow\infty$
(see~\cite[Lemma~I.11.6]{Sa}). Thus, $J_m\geqslant 1$ for
sufficiently large $m\in {\Bbb N}.$ Observe that, a functional
sequence $\psi_m(x)=Q_*(x)\cdot \eta_m^p(|x-x_0|),$ $m=1,2\ldots ,$
is nonnegative, monotone increasing and converges to a function
$\psi(x):=Q_*(x)\cdot \eta^p(|x-x_0|)$ almost everywhere. By the
Lebesgue theorem on the monotone convergence
(see~\cite[Theorem~I.12.6]{Sa}), it is possible to go to the limit
on the right side of the inequality~(\ref{eq11A}), which gives us
the desired inequality~(\ref{eq10A}).
\end{remark}

\medskip
{\it Proof of Theorem~\ref{th1}}. Fix $y_0\in
\overline{f(D)}\setminus\{\infty\},$
$0<r_1<r_2<r_0=\sup\limits_{y\in f(D)}|y-y_0|,$ $C_1\subset B(y_0,
r_1)\cap f(D)$ and $C_2\subset f(D)\setminus B(y_0, r_2).$ Set
$$C_0:=\overline{f^{\,-1}(C_1)}\,,\quad
C^*_0:=\overline{f^{\,-1}(C_2)}$$ (see Figure~\ref{fig1}).
\begin{figure}[h]
%\begin{center}
\centering\includegraphics[scale=0.55]{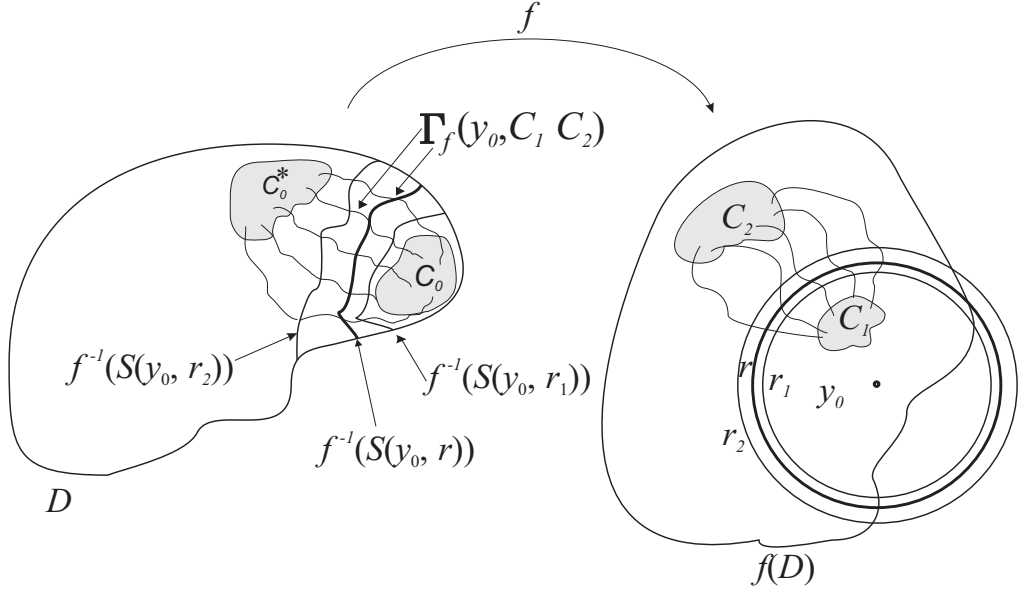}
%\vskip3mm
\caption{To the proof of Theorem~\ref{th1}} \label{fig1}
%\end{center}
\end{figure}
Observe that $C_0$ and $C_1$ are disjoint compact sets in $D,$ see
\cite[Theorem~3.3]{Vu}. Besides that, $C_1$ and $C_2$ are non empty
by the choose of $r_0,$ $r_1$ and $r_2.$

\medskip Let us to show that a set $\sigma_r:=f^{\,-1}(S(y_0, r))$
separates $C_0$ from $C^{\,*}_0$ in $D$ for any $r\in (r_1, r_2).$
Indeed, $\sigma_r$ is closed in $D$ as a preimage of a closed set
$S(y_0, r)$ under the continuous mapping~$f$ (see, e.g.,
\cite[Theorem~1.IV.13, Ch.~1]{Ku}). In particular, $\sigma_r$ is
also closed with respect to $R:=D\setminus (C_0\cup C^{\,*}_0).$ We
put
$$A:=f^{\,-1}(B(y_0,
r))$$ and
$$B:=D\setminus \overline{f^{\,-1}(B(y_0, r))}\,.$$
Observe that, $A$ and $B$ are not empty by the choice of $r_0,$
$r_1,$ $r_2$ and $r.$ Since $f$ is continuous, $f^{\,-1}(B(y_0, r))$
and $D\setminus \overline{f^{\,-1}(B(y_0, r))}$ are open in $D.$ In
other words, $A$ and $B$ are open in
$$R^{\,*}:=R\cup C_0\cup C_1=D\,.$$
Note that $A\cap B=\varnothing,$ and $R^{\,*}\setminus
\sigma_r=A\cup B.$ Let $\Sigma_{C_0, C^{\,*}_0}$ be the family of
all sets separating $C_0$ and $C^{\,*}_0$ in $R^{\,*}.$ In this
case, by the equations of Ziemer and Hesse, see (\ref{eq3})
and~(\ref{eq4}), respectively, we obtain that
\begin{equation}\label{eq7}
M_{\alpha}(\Gamma_f(y_0, C_1,
C_2))=(\widetilde{M}_{p/(n-1)(\Sigma_{r_1, r_2})})^{1-\alpha}\,,
\end{equation}
where $\alpha=\frac{p}{p-n+1}.$
Then by Lemma~\ref{lem1} and by the relation~(\ref{eq7}), we obtain
that
\begin{equation}\label{eq8}
M_{\alpha}(\Gamma_f(y_0, r_1, r_2))\leqslant
\left(\inf\limits_{\rho\in{\rm
ext\,adm}\,f(\Sigma_{\varepsilon})}\int\limits_{f(D)\cap A(y_0, r_1,
r_2)}\frac{\rho^p(y)}{N^{\frac{p}{n-1}}(f, D)\cdot
Q^{\frac{p-n+1}{n-1}}(y)}\,dm(y)\right)^{-\frac{n-1}{p-n+1}}\,,
\end{equation}
where $Q$ is defined by~(\ref{eq1}). Using the second remote formula
in the proof of Theorem~9.2 in \cite{MRSY}, we obtain that
$$\inf\limits_{\rho\in{\rm
ext\,adm}\,f(\Sigma_{\varepsilon})}\int\limits_{f(D)\cap A(y_0, r_1,
r_2)}\frac{\rho^p(y)}{N^{\frac{p}{n-1}}(f, D)\cdot
Q^{\frac{p-n+1}{n-1}}(y)}\,dm(y)=$$
\begin{equation}\label{eq9}
=\int\limits_{r_1}^{r_2} \left(\inf\limits_{\alpha\in
I(r)}\int\limits_{S(y_0, r)\cap
f(D)}\frac{\alpha^{q}(y)}{N^{\frac{p}{n-1}}(f, D)\cdot
Q^{\frac{p-n+1}{n-1}}(y)}\,\,\mathcal{H}^{n-1}(y)\right)\,dr\,,
\end{equation}
where $q=\frac{p}{n-1},$ and $I(r)$ denotes the set of all
measurable functions on $S(y_0, r)\cap f(D)$ such that
$\int\limits_{S(y_0, r)\cap f(D)}\alpha(x)\,\mathcal{H}^{n-1}=1.$
Then, choosing $X=S(y_0, r)\cap f(D),$ $\mu={\mathcal H}^{n-1}$ and
$\varphi=\frac{1}{Q}|_{S(y_0, r)\cap f(D)}$
in~\cite[Lemma~9.2]{MRSY}, we obtain that
\begin{equation}\label{eq10}
\int\limits_{r_1}^{r_2} \left(\inf\limits_{\alpha\in
I(r)}\int\limits_{S(y_0, r)\cap
f(D)}\frac{\alpha^q(y)}{Q(y)}\,\,d\mathcal{H}^{n-1}\right)\,dr=
\int\limits_{r_1}^{r_2}\frac{dr}{\Vert Q\Vert_s(r)}\,,
\end{equation}
where $\Vert Q\Vert_s(r)=\left(\int\limits_{S(y_0, r)\cap
f(D)}Q^s(x)\,d{\mathcal H}^{n-1}\right)^{1/s}$ and
$s:=\frac{n-1}{p-n+1}.$ Thus, by~(\ref{eq8}), (\ref{eq9}) and
(\ref{eq10}) we obtain that
$$M_{\alpha}(\Gamma_f(y_0, r_1, r_2))\leqslant N^{\alpha}(f, D)\cdot
\left(\int\limits_{r_1}^{r_2}\frac{dr}{\Vert
Q\Vert_1(r)}\right)^{\frac{n-1}{p-n+1}}=$$
\begin{equation}\label{eq11}
=\frac{{N^{\alpha}}(f, D)\cdot
\omega_{n-1}}{\left(\int\limits_{r_1}^{r_2}\frac{dr}{r^{\frac{n-1}{\alpha-1}}
\widetilde{q}^{1/(\alpha-1)}_{y_0}(r)}\right)
^{\frac{n-1}{{p-n+1}}}}=\frac{N^{\alpha}(f, D)\cdot
\omega_{n-1}}{\left(\int\limits_{r_1}^{r_2}\frac{dr}{r^{\frac{n-1}{\alpha-1}}
\widetilde{q}^{1/(\alpha-1)}_{y_0}(r)}\right)^{\alpha-1}}\,,
\end{equation}
where $q_{y_0}(r)=\frac{1}{\omega_{n-1}r^{n-1}}\int\limits_{S(y_0,
r)}\widetilde{Q}\,d\mathcal{H}^{n-1}$ and
$\widetilde{Q}(y)=\begin{cases}
Q(y)\,,&y\in f(D)\,,\\
0\,,&y\not\in f(D)\end{cases}.$
Finally, it follows from~(\ref{eq11}) and Proposition~\ref{pr1} that
the relation
$$M_{\frac{p}{p-n+1}}(\Gamma_f(y_0, r_1, r_2))\leqslant \int\limits_{A(y_0,r_1,r_2)\cap
f(D)}N^{\alpha}(f, D)\cdot Q(y)\cdot \eta^{\,\alpha} (|y-y_0|)\,
dm(y)$$
holds for a function $Q(y)=K_{O, \alpha}(y,
f^{\,-1}):=\sum\limits_{x\in f^{\,-1}(y)}K_{I, \alpha}(x, f),$ that
is desired conclusion.~$\Box$

\medskip
{\it Proof of Corollary~\ref{cor2}} immediately follows by
Theorem~\ref{th1} and additional arguments used under the proof of
Corollary~\ref{cor1}.~$\Box$

\medskip
\begin{remark}\label{rem3}
Observe that, the local and boundary behavior of mappings that
satisfy condition~(\ref{eq2*A}) is described in sufficient detail
in~\cite{SSD}, which makes it possible to transfer these results to
mappings participating in Theorem~\ref{th1}. Note also that the
mappings with the inverse Poletsky inequality are part of the
definition of quasiconformality in the case of a bounded function
$Q$ (see~\cite [Ch.~13.1]{Va}), and in the unbounded case were
obtained by different authors under different conditions for $Q$
(see, eg, \cite[Theorem~8.5]{MRSY}, \cite[Lemma~3.1]{Cr}, \cite{KR}
and \cite[Theorem~1.3]{Sev$_2$}). In particular, the statement below
follows directly from Theorem~\ref{th1} and
\cite[Theorem~4.1]{SevSkv}.

\medskip
For domains $D, D^{\,\prime}\subset {\Bbb R}^n,$ $n\geqslant 2,$ a
number $N\in {\Bbb N}$ and a Lebesgue measurable function $Q:{\Bbb
R}^n\rightarrow [0, \infty],$ $Q(y)\equiv 0$ for $y\in{\Bbb
R}^n\setminus D^{\,\prime},$ we denote by $\frak{R}_{Q, N}(D,
D^{\,\prime})$ the family of all open discrete mappings
$f:D\rightarrow D^{\,\prime}$ which are differentiable almost
everywhere, have $N$-Luzin property with respect to the Lebesgue
measure in ${\Bbb R}^n,$ satisfy relation~(\ref{eq2}) and have
$N^{\,- 1}$-property on $S(y_0, r)\cap D^{\,\prime}$ for almost all
$r\in(\varepsilon, r_0)$ relative to the Hausdorff measure
${\mathcal H}^{n-1}$ on $ S(y_0, r)$ for any $y_0\in D^{\,\prime}$
and $r_0=\sup\limits_{y\in D^{\,\prime}}|y-y_0|$ such that

\medskip
1) $N(f, D)\leqslant N,$

\medskip
2) $K_{I, n}(y, f^{\,-1})=\sum\limits_{x\in f^{\,-1}(y)}{K_{O, n}(x,
f)}\leqslant Q(y)$ for any $y\in D^{\,\prime}.$

\medskip
{\it If $Q\in L^1(D^{\,\prime}),$ $D^{\,\prime}$ is bounded and $K$
is a compact set in $D,$ then the inequality
\begin{equation}\label{eq2E}
|f(x)-f(y)|\leqslant\frac{C}{\log^{1/n}\left(1+\frac{r_*}{2|x-y|}\right)}
\end{equation}
holds for any $x, y\in K$ and all $f\in \frak{R, N}_Q(D,
D^{\,\prime}),$ where $C=C(n, N, K, \Vert Q\Vert_1, D,
D^{\,\prime})>0$ is some constant depending only on $n,$ $N,$ $K$
and $\Vert Q\Vert_1,$ $\Vert Q\Vert_1$ denotes $L^1$-norm of $Q$ in
$D^{\,\prime},$ and $r_*=d(K,
\partial D).$ }\end{remark}

\medskip
{\bf \noindent Oleksandr Dovhopiatyi} \\
{\bf 1.} Zhytomyr Ivan Franko State University,  \\
40 Bol'shaya Berdichevskaya Str., 10 008  Zhytomyr, UKRAINE \\
alexdov1111111@gmail.com

\medskip
\medskip
{\bf \noindent Evgeny Sevost'yanov} \\
{\bf 1.} Zhytomyr Ivan Franko State University,  \\
40 Bol'shaya Berdichevskaya Str., 10 008  Zhytomyr, UKRAINE \\
{\bf 2.} Institute of Applied Mathematics and Mechanics\\
of NAS of Ukraine, \\
1 Dobrovol'skogo Str., 84 100 Slavyansk,  UKRAINE\\
esevostyanov2009@gmail.com

\end{document}